\newtheorem{theorem}{Theorem}
\newtheorem{definition}{Definition}
\newtheorem{lemma}{Lemma}
\title{Playing impartial games on a simplicial complex as an extension of the emperor sum theory}
\author{Koki Suetsugu \\ National Institute of Informatics}
\begin{document}
\maketitle 

\abstract{
In this paper, we considered impartial games on a simplicial complex.
Each vertex of a given simplicial complex acts as a position of an impartial game. Each player in turn chooses a face of the simplicial complex and, for each position on each vertex of that face, the player can make an arbitrary number of moves. Moreover, the player can make only a single move for each position on each vertex, not on that face.
We show how the $\mathcal{P}$-positions of this game can be characterized using the $\mathcal{P}$-position length. This result can be considered an extension of the emperor sum theory. While the emperor sum only allowed multiple moves for a single component, this study examines the case where multiple moves can be made for multiple components, and clarifies areas that the emperor sum theory did not cover. 
}

\section{Introduction}\label{Sec: Intro}
Combinatorial game theory studies two-player perfect information games with no chance moves. An {\em option} of position $g$ is a position after one move from $g,$ and a {\em strict follower} of $g$ is a position after an arbitrary number of moves from $g$. {\em Impartial games} are games in which both players have the same set of options at each position. In this study, we consider that games are under the {\em normal play} convention; that is, the player who makes the last move is the winner. We also assume that games are {\em loop-free}; that is, a position occurs at most once in a round.
Under these conventions, exactly one of the players has a winning strategy at any given position. We define a position in which the next (resp. previous) player's winning strategy is an {\em $\mathcal{N}$-position} (resp. {\em $\mathcal{P}$-position}). For more details on combinatorial game theory, see \cite{CGT, LIP}.

The {\em disjunctive sum} of games is one of the most popular concepts in combinatorial game theory. For any position $g$ and $h$, the disjunctive sum of impartial games $g+h$ is a game the options of which are $g'+h$ and $g+h'$, where $g'$ and $h'$ include all the options of $g$ and $h$, respectively. That is, in the disjunctive sum of games, a player chooses one component and moves. The disjunctive sum of games is known to characterize the $\mathcal{P}$-position using the parameter $\mathcal{G}$-value (or Sprague-Grundy value) of components, and is often studied as a central topic in combinatorial game theory.

Theories such as graph theory are known to have a variety of applications. By contrast, combinatorial game theory has been minimally used in applied mathematics to date. In terms of applicability, a game-like mechanism would be similar to the security guarantees in various topics such as automated driving and cyber-attacks. To guarantee the safety of something, it must be shown that there are ways to respond to an attack or accident. This is similar to the guarantee of the existence of a winning strategy that can win no matter what move the other player initiates. In this manner, a disjunctive sum situation is unlikely to occur. This is because in a situation where a system is built as a collection of multiple units, it is necessary to assume that only one of them will be attacked. Therefore, it is thought that the possibility of application can be expanded by deepening the research managing situations where multiple components can be launched.
Therefore, this research will enrich our knowledge in such areas by considering new game combination forms. In this study, we consider a situation where not only multiple components can be moved but also numerous moves can be made for some of the components. In our previous work on emperor sum, we were limited to a single component that can be moved multiple times, but in this work, we further generalize and consider the case where such moves can be made on multiple components.

\subsection{Early Results}
{\sc Nim} is among the most well-known impartial games. In this game, there are several stone heaps. A player chooses a heap and removes any positive stones from the heap. 
Since the games are being considered under the normal conventions of play, in {\sc nim}, the player who removes the last stone wins. In \cite{Bou01}, Bouton proved that the position in {\sc nim} is a $\mathcal{P}$-position if, and only if, the bitwise XOR of the numbers of stones in the heaps is zero.

Let $V$ be a finite set of vertices.
A {\em simplicial complex} $\Delta$ on $V$ is a subset of $2^{\mid V \mid}$
 such that for any element $v \in V,$ $\{v\} \in \Delta$, and if $F \in \Delta$ and $G \subseteq F,$ then $G \in \Delta$. An element of $\Delta$ is termed a {\em face} of $\Delta$.
Ehrenborg and Steingr\'{i}msson studied {\sc nim} on simplicial complexes or {\sc simplicial nim} \cite{ES96}. Let $\Delta$ be a simplicial complex of a finite set $V$. Each vertex has several stones. In {\sc nim} on $\Delta$, the players, in turn, choose a non-empty face $F$ in $\Delta$ and arbitrarily remove any positive number of stones from all vertices in $F$. The original {\sc nim}, {\sc moore's nim} \cite{Moo09}, and {\sc circular nim} \cite{DS13} can be considered as special cases of this game.
Ehrenborg and Steingr\'{i}msson did not characterize the $\mathcal{P}$-positions of {\sc nim} on $\Delta$ without restrictions but discovered sound constructions and characterized $\mathcal{P}$-positions for certain cases. Horrocks and Penn also researched this topic \cite{Hor10} and \cite{Pen21}. 

The intention is to generalize Ehrenborg and Steingr\'{i}msson's study to impartial games. 

\begin{definition}
Let $\Delta$ be a simplicial complex on a finite set $V = (v_1, v_2, \ldots, v_n)$ and let $G = (g_1, g_2, \ldots, g_n)$ be a collection of impartial game positions.

{\em Simplicial emperor sum of $G$ on $\Delta$}, which is denoted by $\Delta(G)$, is a position such that on each vertex $v_i$, there is an impartial game position $g_i$ and a player, in turn, chooses a face $F$ in $\Delta$ and makes many arbitrary moves for positions on all vertices in $F$. In addition, for each position on the vertex, and not in $F$, the player makes at most one move.
\end{definition}

To characterize the $\mathcal{P}$-positions of this ruleset, we use the {\em $\mathcal{P}$-position length}.

\begin{definition}
For any position $g$, the $\mathcal{P}$-position length of $g$ is 
$$
\mathrm{Pl}(g) =
\left\{
\begin{array}{ll}
0, & \text{If $g$ is a terminal position.} \\
\max(\{\mathrm{Pl}(g'): g' \text{ is a $\mathcal{P}$-position and strictly follows } g\}) + 1,     & \text{Otherwise.}
\end{array}
\right.
$$
\end{definition}
The $\mathcal{P}$-position length is used to characterize the $\mathcal{P}$-positions of the {\em emperor sum} of the games.

The following definition and theorem of emperor sum are introduced in \cite{Sue21}.

\begin{definition}
Let $g_1, g_2, \ldots, g_n$ be positions in impartial games.
The emperor sum of the positions $\mathcal{E}(g_1, g_2, \ldots, g_n)$ is a position with the options $\mathcal{E}(g_1', g'_2, \ldots, g_n')$ such that for an integer $i,$ $g_i'$ strictly follows $g_i$, and for any integer $j \neq i, g_j'$ is an option of $g_j$ or identical to $g_j$. That is, for an emperor sum of games, a player selects one component and arbitrarily makes numerous moves, but for every other component, the player may move only once.
\end{definition}

Let $\oplus$ be the bitwise XOR operator.

\begin{theorem}
\label{esum}
The position $\mathcal{E}(g_1, g_2, \ldots, g_n)$ is a $\mathcal{P}$-position if and only if $g_i$ is a $\mathcal{P}$-position for any $i$ and $\mathrm{Pl}(g_1) \oplus \mathrm{Pl}(g_2) \oplus \cdots \oplus \mathrm{Pl}(g_n) = 0$.
\end{theorem}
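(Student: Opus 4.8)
The plan is to pin down the $\mathcal{P}$-positions by guessing the winning set and checking the two standard closure conditions. Put $a_i=\mathrm{Pl}(g_i)$ and let $\mathcal{A}$ be the collection of positions $\mathcal{E}(g_1,\dots,g_n)$ in which every $g_i$ is a $\mathcal{P}$-position and $a_1\oplus\cdots\oplus a_n=0$. The unique terminal position of the emperor sum has every component terminal, hence all $a_i=0$, so it lies in $\mathcal{A}$; thus by the usual inductive characterization of $\mathcal{P}$-positions under the normal convention it is enough to establish two facts: (a) no option of a position in $\mathcal{A}$ lies in $\mathcal{A}$, and (b) every position not in $\mathcal{A}$ has an option in $\mathcal{A}$. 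I would first record that $\mathrm{Pl}$ is well defined (any non-terminal position reaches a terminal, which is a $\mathcal{P}$-position, so the maximum in the definition is taken over a non-empty set) and that, directly from the definition, every $\mathcal{P}$-position $h$ strictly following $g$ satisfies $\mathrm{Pl}(h)<\mathrm{Pl}(g)$.

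The engine of the argument is a \emph{downward reachability} lemma: for any position $g$ and any $b$ with $0\le b<\mathrm{Pl}(g)$ there is a $\mathcal{P}$-position strictly following $g$ with $\mathrm{Pl}$ equal to $b$. I would prove this by induction on $\mathrm{Pl}(g)$: the definition supplies a $\mathcal{P}$-position strict follower $h$ with $\mathrm{Pl}(h)=\mathrm{Pl}(g)-1$, which settles $b=\mathrm{Pl}(g)-1$, and for smaller $b$ the induction hypothesis applied to $h$ produces the required $\mathcal{P}$-position (a strict follower of $h$, hence of $g$). Two consequences drive everything: an emperor move on one component $g_i$ can reach a $\mathcal{P}$-position of \emph{any} prescribed length strictly below $a_i$; and, since an $\mathcal{N}$-position always has a $\mathcal{P}$-position option and such an option is a $\mathcal{P}$-position strict follower, every $\mathcal{N}$-component can be brought in a single step to a $\mathcal{P}$-position whose length is strictly below its own $\mathrm{Pl}$.

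Condition (a) is the easy half. From a position in $\mathcal{A}$ all components are $\mathcal{P}$-positions, so if any non-emperor component is actually moved it becomes an $\mathcal{N}$-position and the result leaves $\mathcal{A}$; and if only the emperor $g_i$ moves, either the result is an $\mathcal{N}$-position (again outside $\mathcal{A}$) or it is a $\mathcal{P}$-position of length strictly less than $a_i$, which changes the running XOR from $0$ to a non-zero value. Either way the option leaves $\mathcal{A}$.

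Condition (b) is the heart of the proof and is where I expect the difficulty. Given a position outside $\mathcal{A}$, I would first make a \emph{tentative nudge}, replacing each $\mathcal{N}$-component by one of its $\mathcal{P}$-options and leaving each $\mathcal{P}$-component fixed; let $W$ be the XOR of the resulting lengths. If $W=0$ then, since the position was not already in $\mathcal{A}$, there is at least one $\mathcal{N}$-component, and declaring it the emperor (its single nudge counting as a legal emperor move) produces a position in $\mathcal{A}$. If $W\ne 0$, let $t$ be the highest set bit of $W$; some component $m$ has a tentative length with bit $t$ set, and I make $g_m$ the emperor and reduce it so that its new length equals its old tentative length XOR $W$, which is strictly smaller by the standard highest-bit argument, while all other components keep their tentative values. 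The apparent obstacle is that the emperor might be asked to reach a length exceeding its reach; this is exactly what the tentative-nudge device defuses, because whether $g_m$ was originally a $\mathcal{P}$-component (reduced from $a_m$) or an $\mathcal{N}$-component (already nudged to some $c_m<\mathrm{Pl}(g_m)$ and now pushed strictly below $c_m$), the target length is in every case strictly below $\mathrm{Pl}(g_m)$ and is therefore reachable by downward reachability. A short computation confirms that the total XOR becomes $0$ and that all components are $\mathcal{P}$-positions, so the option lies in $\mathcal{A}$, completing (b) and hence the theorem.
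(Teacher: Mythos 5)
Your proof is correct, but it takes a different route from the one the paper uses for this particular statement. The paper proves Theorem~\ref{esum} in two lines as a corollary of the general Theorem~\ref{thm2}: the emperor sum is the simplicial emperor sum on the complex whose nonempty faces are the singletons $\{v_i\}$, {\sc nim} on that complex is ordinary {\sc nim}, and Bouton's theorem identifies its $\mathcal{P}$-positions with the zero-XOR tuples. You instead give a direct, self-contained closure argument: guess the candidate set, verify that it contains the terminal position, that it is closed against its own options, and that every outside position has an option inside it. Your ``downward reachability'' lemma and the ``tentative nudge'' that first converts every $\mathcal{N}$-component to a $\mathcal{P}$-option before balancing the XOR are exactly the two devices the paper deploys inside its proof of Theorem~\ref{thm2} (the statement that for any $m<\mathrm{Pl}(h_i)$ there is a $\mathcal{P}$-position strict follower of length $m$, and case~(ii) of Claim~2, respectively), so in effect you have re-derived the general argument in the special case and inlined Bouton's highest-set-bit computation where the paper invokes Lemma~\ref{lem2} as a black box. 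What the paper's route buys is brevity and the conceptual point that the emperor sum sits inside the simplicial framework; what your route buys is independence from the general machinery, plus a cleaner treatment of the boundary case where the tentative nudge already lands on XOR zero --- a case the paper's case~(ii) passes over silently, since it applies case~(i) to $H^*$ without checking that $H^*\notin X$. One small point worth making explicit in a final write-up: you should say why the emperor component genuinely moves (i.e., why the designated strict follower is reached after at least one move), which your construction does guarantee since downward reachability always descends through at least one option.
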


Note that the emperor sum is a special case of the simplicial emperor sum on $\Delta$, such that every face of $\Delta$ has a single vertex.

The remainder of this paper is organized as follows. Section~\ref{sec2} discusses the relationship between {\sc simplicial nim} and impartial games on a simplicial complex. In Section ~\ref{sec3}, we provide a new proof for Theorem \ref{esum}.
The final section presents the conclusions.

\section{Main Result}
\label{sec2}
In this section, we discuss a method for establishing which player has a winning strategy in the simplicial emperor sum.
Let $\Delta$ be a simplicial complex on a set $V = (v_1, v_2, \ldots, v_n )$ and let $P$ be the set of $\mathcal{P}$-positions of {\sc nim} on $\Delta.$

The following lemmas are trivial from the definition of {\sc simplicial nim}.

\begin{lemma}
\label{lem1}

Let $A = (a_1, a_2, \ldots, a_n) \in P.$
For a position $A' = (a'_1, a'_2, \ldots, a'_n),$ if there is a face $F \in \Delta$ such that for any $v_i \in F, a'_i < a_i$ and for any $v_i \not \in F, a'_i = a_i,$ then $A'$ is an $\mathcal{N}$-position.

\end{lemma}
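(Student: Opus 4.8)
The plan is to observe that $A'$ is nothing more than an option of $A$ in {\sc nim} on $\Delta$, and then to invoke the fundamental property of normal-play impartial games that every option of a $\mathcal{P}$-position is an $\mathcal{N}$-position. So the work reduces to checking that the coordinate conditions in the hypothesis encode exactly one admissible move.

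First I would verify that the passage from $A$ to $A'$ is a single legal move. By hypothesis there is a face $F \in \Delta$ with $a_i' < a_i$ for every $v_i \in F$ and $a_i' = a_i$ for every $v_i \notin F$. A move in {\sc simplicial nim} consists of choosing a non-empty face and removing a positive number of stones from each of its vertices while leaving all other vertices untouched. Since $a_i' < a_i$ says precisely that $a_i - a_i' > 0$ stones are removed from $v_i$ for each $v_i \in F$, and since the remaining vertices are unchanged, $A'$ is exactly the result of the legal move associated with $F$; hence $A'$ is an option of $A$. One small point deserves mention: $F$ must be non-empty for this to be a genuine move, but this is automatic once $A' \neq A$, since an empty $F$ would force $A' = A$ through the hypotheses.

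It then remains to apply the standard fact that, under the normal-play convention, every move from a $\mathcal{P}$-position leads to an $\mathcal{N}$-position; were this false, the player to move from $A$ could reach another $\mathcal{P}$-position, contradicting the assumption that $A$ is a $\mathcal{P}$-position. Since $A \in P$ and $A'$ is an option of $A$, we conclude that $A'$ is an $\mathcal{N}$-position.

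I do not expect any real obstacle here, consistent with the author's remark that the lemma is trivial from the definition of {\sc simplicial nim}. The only step requiring attention is the bookkeeping that translates the stated inequalities and equalities into a single admissible move on the face $F$; after that, the conclusion is immediate from the defining property of $\mathcal{P}$-positions.
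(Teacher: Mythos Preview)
Your proposal is correct and matches the paper's approach exactly: the paper states that the lemma is ``trivial from the definition of {\sc simplicial nim}'' and gives no further argument, and your write-up simply spells out this triviality by observing that the hypotheses describe a single legal move from the $\mathcal{P}$-position $A$, whence $A'$ must be an $\mathcal{N}$-position. Your remark on $F$ being non-empty is a reasonable bit of hygiene, though in the paper's usage the lemma is only ever invoked with a non-empty $F$ arising from an actual move.
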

\begin{lemma}
\label{lem2}
For any $B = (b_1, b_2, \ldots, b_n) \not \in P,$ there is a position $B' = (b'_1, b'_2, \ldots, b'_n) \in P$ such that a face $F \in \Delta$ satisfies for any $v_i \in F, b'_i < b_i$, and for any $v_i \not \in F, b'_i = b_i.$ 
\end{lemma}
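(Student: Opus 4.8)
The plan is to recognize that this lemma is simply the normal-play characterization of $\mathcal{N}$-positions, transported through the move structure of {\sc nim} on $\Delta$. Recall the standard dichotomy for impartial games under the normal play convention: every position is either a $\mathcal{P}$-position or an $\mathcal{N}$-position, and a position is an $\mathcal{N}$-position if and only if at least one of its options is a $\mathcal{P}$-position. This characterization is obtained by backward induction from the terminal positions, and it is valid here because {\sc nim} on $\Delta$ is finite and loop-free: every legal move strictly decreases the total number of stones, so the position relation is well-founded and the induction terminates.

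The second ingredient is an explicit description of the options of a position. First I would observe that the options of $B = (b_1, b_2, \ldots, b_n)$ in {\sc nim} on $\Delta$ are exactly those positions $B' = (b'_1, b'_2, \ldots, b'_n)$ for which there is a non-empty face $F \in \Delta$ with $b'_i < b_i$ for every $v_i \in F$ and $b'_i = b_i$ for every $v_i \not\in F$. This is immediate from the rule of {\sc simplicial nim}, in which a player chooses a non-empty face and removes a positive number of stones from each of its vertices while leaving all other vertices untouched.

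Combining the two ingredients yields the statement. Since $B \not\in P$, the position $B$ is an $\mathcal{N}$-position, so by the characterization above it has at least one option $B'$ that is a $\mathcal{P}$-position, i.e. $B' \in P$. By the description of the options, this $B'$ has precisely the claimed form with respect to some face $F \in \Delta$. I do not anticipate any genuine obstacle, which is why the paper records the lemma as trivial from the definition; the only point warranting a moment of care is verifying well-foundedness so that the inductive definition of $\mathcal{P}$- and $\mathcal{N}$-positions is legitimate, and this follows at once from the strict decrease of the total stone count under every move. (Lemma~\ref{lem1} is the dual half of the same dichotomy, namely that every option of a $\mathcal{P}$-position is an $\mathcal{N}$-position.)
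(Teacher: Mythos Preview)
Your proposal is correct and matches the paper's approach: the paper simply declares Lemma~\ref{lem2} (together with Lemma~\ref{lem1}) to be ``trivial from the definition of {\sc simplicial nim},'' and your write-up just unpacks that triviality by combining the standard $\mathcal{P}/\mathcal{N}$ dichotomy with the explicit description of options in {\sc simplicial nim}. There is nothing more to add.
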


\begin{theorem}
\label{thm2}
For a collection of positions of impartial games $G = (g_1, g_2, \ldots, g_n),$ $\Delta(G)$ is a $\mathcal{P}$-position, if and only if, $(\mathrm{Pl}(g_1), \mathrm{Pl}(g_2), \ldots, \mathrm{Pl}(g_n)) \in P$, and for any $i$, $g_i$ is a $\mathcal{P}$-position.
\end{theorem}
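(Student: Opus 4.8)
The plan is to apply the standard characterization of $\mathcal{P}$-positions: a set $\mathcal{Q}$ of positions coincides with the set of $\mathcal{P}$-positions precisely when (i) every option of a position in $\mathcal{Q}$ lies outside $\mathcal{Q}$, and (ii) every position outside $\mathcal{Q}$ has an option in $\mathcal{Q}$. I take $\mathcal{Q}$ to be the set of $\Delta(G)$ for which the stated condition holds, namely that every $g_i$ is a $\mathcal{P}$-position and $(\mathrm{Pl}(g_1), \ldots, \mathrm{Pl}(g_n)) \in P$; write $C(G)$ for this condition. Condition (ii) automatically forces terminal positions into $\mathcal{Q}$, and indeed the terminal $\Delta(G)$ has every $g_i$ terminal, so all $g_i$ are $\mathcal{P}$-positions and the $\mathrm{Pl}$-vector is $(0,\dots,0)\in P$.

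First I would record the one structural fact about $\mathrm{Pl}$ that drives everything: for any position $g$, the set $\{\mathrm{Pl}(h) : h\text{ is a }\mathcal{P}\text{-position strictly following }g\}$ is exactly $\{0,1,\dots,\mathrm{Pl}(g)-1\}$. The upper bound and the attainment of $\mathrm{Pl}(g)-1$ are immediate from the definition of $\mathrm{Pl}$, and downward closure follows by induction on game length: a witness $h^{*}$ of value $\mathrm{Pl}(g)-1$ itself has $\mathcal{P}$-followers of every smaller value, and these strictly follow $g$ as well. In particular, from any position I can reach a $\mathcal{P}$-position of any prescribed smaller $\mathrm{Pl}$-value, and every $\mathcal{P}$-option of an $\mathcal{N}$-position $g$ has $\mathrm{Pl}$-value at most $\mathrm{Pl}(g)-1$.

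For condition (i), suppose $C(G)$ holds and consider any move: it selects a nonempty face $F$, replaces each $g_i$ with $i \in F$ by a strict follower, and replaces each $g_i$ with $i \notin F$ by an option or leaves it unchanged. If the resulting $\Delta(G')$ is to satisfy $C(G')$ then every $g_i'$ must be a $\mathcal{P}$-position; since an option of the $\mathcal{P}$-position $g_i$ is an $\mathcal{N}$-position, each coordinate off $F$ must be left unchanged, while each coordinate on $F$ is moved to a $\mathcal{P}$-position and hence has strictly smaller $\mathrm{Pl}$-value. Thus the $\mathrm{Pl}$-vector is strictly decreased exactly on the face $F$ and fixed elsewhere, so Lemma~\ref{lem1} shows it leaves $P$; hence $C(G')$ fails.

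The substance of the argument is condition (ii), and the case I expect to be the main obstacle is when some $g_i$ is an $\mathcal{N}$-position, because off $F$ each such component may be moved only once and so its $\mathcal{P}$-target is not freely controllable. I would resolve this by committing to targets in advance: for each $\mathcal{N}$-component choose a $\mathcal{P}$-option $h_i$ and set $\tilde u_i = \mathrm{Pl}(h_i)$, while $\tilde u_i = \mathrm{Pl}(g_i)$ for the $\mathcal{P}$-components, obtaining a vector $\tilde u$. If $\tilde u \notin P$, Lemma~\ref{lem2} provides a nonempty face $F_0$ and a strict decrease on $F_0$ reaching some $w \in P$; I realize this by reducing each on-$F_0$ component to a $\mathcal{P}$-position of $\mathrm{Pl}$-value $w_i$ (possible by the structural fact, since $w_i < \tilde u_i \le \mathrm{Pl}(g_i)$), moving each off-$F_0$ $\mathcal{N}$-component to its chosen $h_i$, and keeping every off-$F_0$ $\mathcal{P}$-component, which yields $C$. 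If instead $\tilde u \in P$, then because $C(G)$ fails there is at least one $\mathcal{N}$-component; routing it through a singleton face while sending every other $\mathcal{N}$-component to its chosen $h_i$ and keeping the $\mathcal{P}$-components reaches the vector $\tilde u \in P$, again giving $C$. The point demanding care throughout is the interplay between the face move and the ``at most one move off the face'' rule together with the structural fact about $\mathrm{Pl}$, which is exactly what lets the pre-committed option-values be absorbed into a genuine {\sc simplicial nim} move on the $\mathrm{Pl}$-vector.
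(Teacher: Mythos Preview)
Your proposal is correct and follows the same two-claim strategy as the paper, using Lemmas~\ref{lem1} and~\ref{lem2} together with the downward-closure of the $\mathrm{Pl}$-values among $\mathcal{P}$-followers. For Claim~2 you split on whether the pre-committed vector $\tilde u$ already lies in $P$, whereas the paper first passes to the auxiliary all-$\mathcal{P}$ tuple $H^{*}$ and then invokes its case~(i); your organization is in fact slightly more explicit, since it directly handles the sub-case $\tilde u\in P$ via a singleton face, a point the paper's wording leaves implicit.
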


\begin{proof}
Let $X$ and $Y$ be collections of positions of impartial games such that $X = \{(g_1, g_2, \ldots, g_n): (\mathrm{Pl}(g_1), \mathrm{Pl}(g_2), \ldots, \mathrm{Pl}(g_n)) \in P \text{, and for any } i, g_i \text{ is a } \mathcal{P} \text{-position.}\}$ and $Y = \{(g_1, g_2, \ldots, g_n):(g_1, g_2, \ldots, g_n) \not \in X\}$.

To prove this theorem, it is sufficient to demonstrate the correctness of the following two claims.

\begin{itemize}
    \item [Claim 1.] For any $G \in X,$ every $G'$, such that $\Delta(G')$ is an option of $\Delta(G)$ that satisfies $G' \in Y.$
    
    \item [Claim 2.] For any $H \in Y,$ there is a collection of positions of impartial games $H'$ such that $H' \in X$ and $\Delta(H')$ is an option of $\Delta(H)$.
\end{itemize}

\begin{proof}[Proof of Claim 1]
Assuming that $G \in X$ and $G' = (g_1', g_2', \ldots, g_n'), \Delta(G')$ is an option of $\Delta(G).$
If $G' \in X,$ then $(\mathrm{Pl}(g_1'), \mathrm{Pl}(g_2'), \ldots, \mathrm{Pl}(g_n')) \in P.$ From the assumption $G \in X$, $(\mathrm{Pl}(g_1), \mathrm{Pl}(g_2), \ldots, \mathrm{Pl}(g_n)) \in P.$

Let $F$ be the face chosen in the move $G \rightarrow G'.$ 
If $g_i' \neq g_i,$ then $v_i \in F.$ Therefore, if $\mathrm{Pl}(g_i') < \mathrm{Pl}(g_i),$ then $v_i \in F$ and otherwise, $v_i \not \in F.$ 
Thus, from Lemma \ref{lem1}, in the {\sc nim} on $\Delta,$ $(\mathrm{Pl}(g_1'), \mathrm{Pl}(g_2'), \ldots, \mathrm{Pl}(g_n'))$ is an $\mathcal{N}$-position, which is a contradiction.
\end{proof}

\begin{proof}[Proof of Claim 2]
Assume that $H = (h_1, h_2, \ldots, h_n) \in Y.$ Consider that $H_p = (\mathrm{Pl}(h_1), \allowbreak  \mathrm{Pl}(h_2), \ldots,  \mathrm{Pl}(h_n)).$
Two cases are considered:
\begin{itemize}
\item[(i)]
Every $h_i \in H$ is a $\mathcal{P}$-position:
Because $H \not \in X, H_p$ is not a $\mathcal{P}$-position in {\sc nim} on $\Delta.$
Therefore, from Lemma \ref{lem2}, a position $H_p' = (h_{p1}', h_{p2}', \ldots, h_{pn}') \in P$, and a face $F$ exist such that for every $v_i \in F, h_{pi}' < \mathrm{Pl}(h_i)$ and every $v_i \not \in F, h_{pi}' = \mathrm{Pl}(h_i).$ 
From the definition of the $\mathcal{P}$-position length, for any non-negative integer $m < \mathrm{Pl}(h_i)$, $h_i$ has a strict follower $x$, which is a $\mathcal{P}$-position and $\mathrm{Pl}(x) = m.$
Thus, by choosing $F$, one could transfer from $\Delta(H)$, to $\Delta(H')$, where $H' = (h_1', h_2', \ldots, h_n')$ satisfies $(\mathrm{Pl}(h_1'), \mathrm{Pl}(h_2'), \ldots, \mathrm{Pl}(h_n')) = H_p'$, and every $h_i'$ is a $\mathcal{P}$-position.

\item[(ii)] Otherwise:
Let $h_i^* = h_i$ if $h_i$ is a $\mathcal{P}$-position and otherwise, let $h_i^*$ be an option of $h_i$ and be a $\mathcal{P}$-position.
Consider $H^* = (h_1^*, h_2^*, \ldots, h_n^*)$.

From case (i), there is a collection of positions $H'$, such that $\Delta(H')$ is an option of $\Delta(H^*)$ and $H' \in X.$ 
We have a move $\Delta(H) \rightarrow \Delta(H')$, which is still a legal move in the simplicial emperor sum on $\Delta$ because every component not on a vertex in $F$ is made at most one move.
\end{itemize}

From cases (i) and (ii), for any $H = (h_1, h_2, \ldots, h_n) \in Y, \Delta(H)$ has option $\Delta(H')$ such that $H' \in X.$

\end{proof}

Note that $X$ includes every terminal position $(g_1, g_2, \ldots, g_n),$ where $g_i$ is the terminal position for any $i$. Therefore, from Claims 1 and 2, $X$ is the set of $\mathcal{P}$-positions and $Y$ is the set of $\mathcal{N}$-positions of the simplicial emperor sum on $\Delta$. 
\end{proof}

\section{New aspect of emperor sum of games}
\label{sec3}
The emperor sum of games is a special case of the simplicial emperor sum. Therefore, this sum can be considered within a broader framework. This fact makes the proof of Theorem \ref{esum} very simple.
\begin{proof}[New proof of Theorem \ref{esum}]
Let $\Delta = \bigcup_i (\{\{v_i\}\}).$ The ruleset of
{\sc nim} on $\Delta$ is the same as that of the original {\sc nim}.
Therefore, a $\mathcal{P}$-position $P = (a_1, a_2, \ldots, a_n)$ satisfies $a_1 \oplus a_2 \oplus \cdots \oplus a_n = 0 $.

Further, the emperor sum of $g_1, g_2, \ldots, g_n$ is the same as the simplicial emperor sum on $\Delta.$
Thus, from Theorem \ref{thm2}, a collection of positions of impartial games $G = (g_1, g_2, \ldots, g_n)$ is a $\mathcal{P}$-position of the emperor sum of games, if and only if, ${\rm Pl}(g_1) \oplus {\rm Pl}(g_2) \oplus \cdots \oplus {\rm Pl}(g_n) = 0$, and $g_i$ is a $\mathcal{P}$-position for any $i.$

\end{proof}

\section{Conclusion}
In this study, we consider the simplicial emperor sum and characterize the $\mathcal{P}$-positions for the game.
This result is an extension of the emperor sum and provides a more general picture of the behavior of $\mathcal{P}$-positions when choosing multiple components in one move is allowed.
The study of sums allowing multiple components to be chosen is less advanced than that of disjunctive sums. Therefore, the contributions of this study will aid in future research.

{\small
\addcontentsline{toc}{section}{References}

}

\end{document}